\theoremstyle{plain}
\newtheorem{theorem}{Theorem}
\newtheorem{corollary}[theorem]{Corollary}
\newtheorem{remark}{Remark}
\newcommand\tr{\operatorname{Tr}}
\begin{document}

\title[]{On isometry groups of self-adjoint traceless and skew-symmetric matrices}
\author{Marcell Ga\'al and Robert M. Guralnick}
\address{Department of Analysis, Bolyai Institute\\
         University of Szeged\\
         H-6720 Szeged, Aradi v\'ertan\'uk tere 1., Hungary}
\email{marcell.gaal.91@gmail.com}
\address{Department of Mathematics, University of Southern California, Los Angeles, CA 90089-2532
}
\email{guralnic@usc.edu}

\keywords{Self-adjoint traceless matrices, isometries, overgroups.}
\subjclass[2010]{Primary: 15A86, 47B49}

\begin{abstract}
This note is concerned with isometries on the spaces of self-adjoint traceless matrices. We compute the group of isometries with respect to any unitary similarity invariant norm. This completes and extends the result of Nagy on Schatten $p$-norm isometries. Furthermore, we point out that our proof techniques could be applied to obtain an old result concerning isometries on skew-symmetric matrices.
\end{abstract}
\maketitle


\section{Introduction}

Let us denote $H^0_{n}$ the real vector space of $n$ by $n$ self-adjoint traceless matrices which has dimension $n^2-1$. To exclude trivial considerations we always assume that $n\geq 2$. Let $PSU(n)$ denote the image of $SU(n)$ in $GL(n^2-1,\mathbb{R})$ acting on $H^0_{n}$ via the adjoint representation
$$Ad:SU(n)\to GL(n^2-1,\mathbb{R}),\quad A\mapsto UAU^{-1}$$
which is isomorphic to $SU(n)/\{\zeta I\}$ where $\zeta$ runs through the set of $n$-th roots of unity and $I$ is the identity matrix. The symbol $\sigma$ stands for the Cartan involution which sends any element of $H^0_{n}$ to its negative transpose.

In the paper \cite{nagy} titled \textit{"Isometries of the spaces of self-adjoint traceless operators"} Nagy by means of "the invariance of domain" proved that the isometries on $H^0_{n}$ are automatically surjective and thus linear up to a translation, by the Mazur-Ulam theorem. Furthermore, the complete description of the structure of linear isometries with respect to a large class of unitary similarity invariant norm, more precisely, with respect to any Schatten $p$-norm $\|.\|_p$ was given whenever $n\neq 3$. Assume for a moment that $n>2$. Then the result of Nagy could be reformulated as follows. If $\mathcal{K}$ is the isometry group of $\|.\|_p$ on $H^0_{n}$, then one of the following happens:
\begin{itemize}
\item[(i)] $p\neq 2$ and $\mathcal{K}$ is generated by $PSU(n)$, $\mathbb{Z}_2$ and the Cartan involution;
\item[(ii)] $p=2$ and $\mathcal{K}=O(n^2-1,\mathbb{R})$.
\end{itemize}
Here, we consider the group $\mathbb{Z}_2$ as operators acting on $H^0_{n}$ by scalar multiplications with modulus one.
It was also pointed out in \cite{nagy} that the result remains true when any unitary invariant norm is considered in the case where $n=2$. In such a case both of the groups described above are the same.

In the present note we follow this line of research and determine the isometry group of any \textit{unitary invariant} norm on $H_{n}^0$. This problem was proposed in \cite{nagy} for further research. It turns out that we need to require only some weaker invariance property: the norm in question must be invariant under \textit{unitary similarity transformations}. At the same time we complete the former work of Nagy where the case $n=3$ has not been treated yet.

The clever proof of the main result in \cite{nagy} (i.e. the result on the corresponding isometry groups when $n\geq 3$) relies heavily on Uhlhorn's version of Wigner theorem. This theorem states that if $n\geq 3$, then every transformation between rank one orthogonal projections preserving the orthogonality is necessarily implemented by an either unitary or antiunitary similarity transformation.

Our method is based on the description of all compact $PSU(n)$ overgroups on self-adjoint traceless matrices. That is, we determine all the compact Lie groups lying between $PSU(n)$ and $GL(n^2-1,\mathbb{R})$. Then we select from the list which one preserve any unitary similarity invariant norm.
This approach for general algebraic group $G$ was first suggested by Dynkin in his classical paper for solving various linear preserver problems including certain $G$-invariant properties. Later, this scheme was employed effectively by the second named author in \cite{G1} to achieve results on invertible transformations on the full matrix algebra $M_n(\mathbb{F})$ (over any infinite field $\mathbb{F}$) preserving finite union of similarity classes.

Although the aforementioned classification result of Dynkin on maximal subgroups of algebraic groups are originally obtained for complex algebraic groups, by using some theory of semisimple Lie groups we are also able to determine compact overgroups. This has been done in \cite{guralnick} where among others compact $PSU(n)$ overgroups were determined on $M_n(\mathbb{C})$. Furthermore, the result was employed to achieve invertible preserver results on certain unitary similarity invariant quantities such as unitary similarity invariant norms, generalized numerical ranges and numerical radii. Consequently, our paper is a contribution to the previously mentioned results, as well.
As for further investigations in this direction, we mention the series of publications \cite{DL}, \cite{DP}, \cite{G1}, \cite{G2}, \cite{PD}.

Another motivation of the present note is to clarify matters concerning isometry groups of real skew-symmetric matrices. In 1944 the structure of spectral norm isometries was explored by Morita \cite{morita}. In 1991 a far-reaching generalization of Morita's result was given by Li and Tsing \cite{chi}. Namely, they described the general form of $c$-spectral norm isometries. Furthermore, at the end of the paper \cite{chi} the authors claimed that the problem has been already solved for orthogonal congruence invariant norms, as well. Nevertheless, any complete proof of the much more general statement has not been published yet. Fortunately, in this note we could present a short one.

Let us now turn to the formulation of the main result of the paper.
Clearly, $PSU(n)$ preserves the trace form $\langle A,B\rangle = \tr AB$ and this it embeds in the orthogonal group of $H_n^0$. Let us denote by $O(n^2-1,\mathbb{R})$ this group.
The main purpose of this note is to prove the following result.

\begin{theorem}\label{T:1}
Assume that $n\geq 3$. Let $\mathcal{K}$ be the isometry group of any unitary similarity invariant norm on $H^0_{n}$. Then one of the following holds:
\begin{itemize}
    \item [(i)] $\mathcal{K}=PSU(n)\rtimes\langle \mathbb{Z}_2, \sigma \rangle$;
    \item [(ii)] $\mathcal{K}=O(n^2-1,\mathbb{R})$.
\end{itemize}
\end{theorem}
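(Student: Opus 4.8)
The plan is to exploit that, by Nagy's reduction recalled above, every isometry of a unitary similarity invariant norm is linear after translation, so it suffices to describe the group $\mathcal{K}$ of \emph{linear} isometries. This $\mathcal{K}$ is a closed and bounded, hence compact, subgroup of $GL(n^2-1,\mathbb{R})$, and by the very definition of unitary similarity invariance it contains $PSU(n)$. Two further isometries are available for free: the scalar $-I$ (since $\|-A\|=\|A\|$) and the Cartan involution $\sigma\colon A\mapsto -A^{T}$ (since $A$ and $A^{T}$ share the same spectrum, so $\|\sigma(A)\|=\|A\|$). Thus $\langle PSU(n),-I,\sigma\rangle\subseteq\mathcal{K}$ in every case, and the task is to show that $\mathcal{K}$ is either this group or all of $O(n^2-1,\mathbb{R})$.

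First I would pin down the ambient orthogonal group. Since the adjoint action of $SU(n)$ on $H^{0}_{n}$ is a real irreducible representation of real type, Schur's lemma shows that the only $PSU(n)$-invariant symmetric bilinear forms are the multiples of the trace form $\langle A,B\rangle=\tr AB$, and that the centralizer of $PSU(n)$ in $GL(n^2-1,\mathbb{R})$ is $\mathbb{R}^{\times}I$. Averaging any inner product over the compact group $\mathcal{K}$ produces a $\mathcal{K}$-invariant, a fortiori $PSU(n)$-invariant, inner product, which must therefore be a positive multiple of the trace form; hence $\mathcal{K}\subseteq O(n^2-1,\mathbb{R})$, and the centralizer of $PSU(n)$ inside $O(n^2-1,\mathbb{R})$ is just $\{\pm I\}$.

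The heart of the argument is the identity component $\mathcal{K}^{0}$: it is a connected compact group with $PSU(n)\subseteq\mathcal{K}^{0}\subseteq SO(n^2-1,\mathbb{R})$, and here I would invoke the classification of compact $PSU(n)$-overgroups from \cite{guralnick} (ultimately Dynkin's description of maximal connected subgroups) to conclude that $\mathcal{K}^{0}$ is either $PSU(n)$ or all of $SO(n^2-1,\mathbb{R})$. This step, especially disposing of the possible intermediate connected groups and treating the delicate small case $n=3$, is where I expect the real difficulty to lie. Once this dichotomy is in hand the two conclusions follow quickly. If $\mathcal{K}^{0}=SO(n^2-1,\mathbb{R})$, then $\mathcal{K}$ contains a group acting transitively on each sphere of the trace form, so the norm is constant on these spheres and, by homogeneity, a scalar multiple of the trace-form norm; its full isometry group is then $O(n^2-1,\mathbb{R})$, giving (ii).

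In the remaining case $\mathcal{K}^{0}=PSU(n)$, the group $\mathcal{K}$ normalizes its identity component, so $\mathcal{K}\subseteq N_{O(n^2-1,\mathbb{R})}(PSU(n))$. Conjugation gives a homomorphism from this normalizer to $\mathrm{Aut}(PSU(n))$ whose kernel is the centralizer $\{\pm I\}$ computed above; since $\mathrm{Out}(PSU(n))=\mathbb{Z}_2$ for $n\geq 3$ and its nontrivial class is realized precisely by the diagram automorphism induced by $\sigma$, I would deduce that this normalizer equals $PSU(n)\rtimes\langle\mathbb{Z}_2,\sigma\rangle$, where $\langle\mathbb{Z}_2,\sigma\rangle=\langle -I,\sigma\rangle\cong\mathbb{Z}_2\times\mathbb{Z}_2$ meets $PSU(n)$ trivially. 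Combining $\mathcal{K}\subseteq N_{O(n^2-1,\mathbb{R})}(PSU(n))$ with the inclusion $\langle PSU(n),-I,\sigma\rangle\subseteq\mathcal{K}$ from the first paragraph forces equality, which is exactly (i).
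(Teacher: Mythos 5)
Your proposal is correct, and its skeleton coincides with the paper's: pass to linear isometries via Nagy's theorem, note that $\mathcal{K}$ is compact and contains $PSU(n)$, $-I$ and $\sigma$, pin the identity component down to $PSU(n)$ or $SO(n^2-1,\mathbb{R})$ by the compact overgroup classification of Guralnick--Li (resting on Dynkin), and finish with a normalizer argument. Where you genuinely diverge is the normalizer of $PSU(n)$: you first force $\mathcal{K}\subseteq O(n^2-1,\mathbb{R})$ by averaging an inner product and applying Schur's lemma, and then compute $N_{O(n^2-1,\mathbb{R})}(PSU(n))$ abstractly --- conjugation has kernel the centralizer $\{\pm I\}$, the quotient embeds in $\mathrm{Aut}(PSU(n))$, and $\mathrm{Out}(PSU(n))\cong\mathbb{Z}_2$ is realized by $\sigma$. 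The paper explicitly declines this route (``for the readers familiar with representation theory it is a simple matter but here we give a much elementary proof'') and instead computes the normalizer inside all of $GL(n^2-1,\mathbb{R})$: a normalizing map permutes unitary similarity orbits, extends to $M_n(\mathbb{C})$, and is then pinned down by Li--Tsing's theorem on linear maps relating unitary similarity orbits. Your version is shorter and keeps the scalars $\mathbb{R}^\times$ out of the picture entirely; the paper's is elementary, avoids Lie-theoretic input beyond the overgroup list, and yields the larger normalizer in $GL$. Two small points to patch in your write-up. First, the classification you invoke is of compact \emph{semisimple} overgroups, so before applying it you must note that $\mathcal{K}^{0}$ is semisimple; this is one line from facts you already have (the center of $\mathcal{K}^{0}$ lies in the centralizer $\{\pm I\}$ of $PSU(n)$, hence the central torus in the structure theorem for compact connected groups is trivial), and it is exactly how the paper's proof opens. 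Second, your worry that $n=3$ requires delicate separate treatment is unfounded: the overgroup list is uniform for all $n\geq 3$; the $n=3$ difficulty in Nagy's original paper stemmed from Uhlhorn's theorem, which plays no role in this approach. Finally, your sphere-transitivity argument in case (ii) is a welcome detail, since the paper leaves that closing step implicit.
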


We note that condition (i) of Theorem~\ref{T:1} holds if and only if the norm is not a multiple of the Frobenius norm and then the isometries have a simple structure. Further, condition (ii) is satisfied if and only if the norm in question is induced by an inner product. In such a case the corresponding isometries have no structure and there are plenty of them.

\section{Proof of Theorem~\ref{T:1}}

Let $\Gamma$ be any compact Lie group lying between $PSU(n)$ and $GL(n^2-1,\mathbb{R})$. By the structure theorem of compact connected Lie groups (see e.g. \cite[p. 241]{onishchik}), the connected one-component $\Gamma_0$ is a product of its commutator subgroup $\Gamma_0'$ and a torus which is a subgroup of the centralizer $Z(\Gamma_0)$. In particular, the commutator subgroup is semisimple.
As the adjoint action is (absolutely) irreducible on $H_n^0$, by Schur's lemma and the compactness of $\Gamma_0$, $Z(\Gamma_0)$ lies in both the scalars and $O(n^2-1,\mathbb{R})$. Since $O(n^2-1,\mathbb{R})\cap$ scalars is finite we conclude that $\Gamma_0$ is necessarily semisimple.

Next observe that $sl(n,\mathbb{C})=H_{n}^0\otimes \mathbb{C}$.
The group $PSU(n)$ acts also on $sl(n,\mathbb{C})$. The compact semisimple overgroups of $PSU(n)$ in $GL(n^2-1,\mathbb{C})$ were determined in \cite[Corollary 2.3]{guralnick}.
By that result we have the following list on compact semisimple overgroups:

\begin{center}
 \begin{tabular}{||c c c||}
 \hline
 $X(\mathbb{C})$ & $X$ & Comments \\ [0.5ex]
 \hline\hline
 $PSL(n,\mathbb{C})$ & $PSU(n)$ & $H_n^0$ invariant \\
 \hline
 $\Lambda(\mathbb{C})$ & $\Lambda$ & $n=4$   \\
 \hline
 $SL(n^2-1,\mathbb{C})$ & $SU(n^2-1)$ &  \\
 \hline
 $SO(n^2-1,\mathbb{C})$ & $SO(n^2-1,\mathbb{R})$ & $H_n^0$ invariant \\
 \hline
\end{tabular}
\end{center}
where $\Lambda$ is a maximal compact subgroup of $SU(6)/\langle -I \rangle$ and $X(\mathbb{C})$ stands for the Zariski closure (or complexification) of the Lie group $X$.
The group $\Lambda$ cannot leave the space $H_n^{0}$ invariant as this representation (the exterior square of the natural module, see \cite[Section 3]{PD} for its definition) is not defined over $\mathbb{R}$. Clearly, this holds true for $SU(n^2-1)$.
It follows that the compact semisimple overgroups in $GL(n^2-1,\mathbb{R})$ are $\Gamma_0=PSU(n),SO(n^2-1,\mathbb{R})$.

Let $N(\Gamma_0)$ denote the normalizer of $\Gamma_0$ in $GL(n^2-1,\mathbb{R})$. The inclusion
\[
\Gamma_0 \leq \Gamma \leq N(\Gamma) \leq N(\Gamma_0),
\]
is fundamental and gives us that any compact overgroup normalizes a semisimple one.
The normalizer of $SO(n^2-1,\mathbb{R})$ is $$\langle \mathbb{R}^{\times},SO(n^2-1,\mathbb{R})\rangle =\langle \mathbb{R}^{\times}_{+},O(n^2-1,\mathbb{R})\rangle$$
(see e.g. \cite[Theorem 2.5]{DL}).
Therefore, we just need to calculate the normalizer of $PSU(n)$. For the readers familiar with representation theory it is a simple matter but here we give a much elementary proof. Assume that $L$ lies in the normalizer. Then for any $U\in U(n)$ there exists a $V\in U(n)$ such that
\begin{equation} \label{normal}
    L(UXU^{-1})=VL(X)V^{-1}
\end{equation}
holds for all $X\in H_{n}^0$. Set $Y=L(X)$. It follows that for the unitary similarity orbit of $X$
\[
\tilde{\mathcal{U}}(X)=\{UXU^{-1}: \quad U\in U(n)\}
\]
we have $L(\tilde{\mathcal{U}}(X))\subseteq \tilde{\mathcal{U}}(Y)$. Substitute $L^{-1}$ and $L(X)$ in \eqref{normal} into the places of $L$ and $X$, respectively. We infer that for any $U\in U(n)$ there is a $V\in U(n)$ such that
\[
L^{-1}(UL(X)U^{-1})=VXV^{-1}
\]
and thus
\[
UYU^{-1}=UL(X)U^{-1}=L(VXV^{-1})
\]
satisfied. This yields $L(\tilde{\mathcal{U}}(X)) = \tilde{\mathcal{U}}(Y)$. We deduce that the unitary similarity orbit of an element mapped to another. Therefore, there is a map $\tilde{L}$ on the whole matrix algebra $M_n(\mathbb{C})$ such that $\tilde{L}(\tilde{\mathcal{U}}(X))=\tilde{\mathcal{U}}(Y)$ holds, and $L$ is the restriction of $\tilde{L}$ to the set of self-adjoint traceless matrices.
By part (ii) of \cite[Theorem 3.2]{orbit} we conclude that there exist $\eta\in \mathbb{R}, U \in U(n)$ and $B\in M_n(\mathbb{C})$ such that either $$\tilde{L}(X)=\eta UXU^{-1}+(\tr X)B$$ or $$\tilde{L}(X)=\eta U\sigma(X)U^{-1}+(\tr X)B$$ holds for all $X\in M_n(\mathbb{C})$.
Taking all the information what we have into account we conclude that the normalizer is a subgroup of $\langle \mathbb{R}^{\times}$, $PSU(n), \sigma \rangle$. Since
\[
\sigma(UAU^{-1})=(U')^{-1}\sigma(A)U' \qquad (A\in H_n^0)
\]
we deduce that the Cartan involution normalizes $PSU(n)$. Hence the normalizer is $\langle \mathbb{R}^{\times}$, $PSU(n), \sigma \rangle$.

An isometry group of a unitary similarity invariant norm is compact, and must contain $PSU(n),\mathbb{Z}_2$ and $\langle \sigma \rangle$. It follows that $\mathcal{K}$ is of the form as we have desired. The proof is completed.

\section{Additional results}

Next we list some direct consequences of Theorem~\ref{T:1}. We remark that questions of similar kind were answered in \cite{guralnick},\cite{iso}.
Recall that two normed spaces $(V_1,\|.\|_1)$ and $(V_2,\|.\|_2)$ are said to be isometric if there exists a linear mapping $L:V_1\to V_2$ such that $\|L(x)\|=\|x\|$ holds for all $x\in V_1$.

\begin{corollary}
Assume that $\|.\|_1$ and $\|.\|_2$ are norms on $H^0_{n}$ with isometry groups $\mathcal{K}_1$ and $\mathcal{K}_2$. Then an isometric isomorphism $L:H^0_{n}\to H^0_{n}$ exists if and only if $\mathcal{K}_1=\mathcal{K}_2$ and $\|.\|_1=\gamma \|.\|_2$ with some $\gamma > 0$ and $\gamma^{-1}L\in \mathcal{K}_1$.
\end{corollary}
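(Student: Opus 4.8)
The plan is to use the elementary fact that an isometric isomorphism conjugates one isometry group onto the other, and then to invoke the rigidity of Theorem~\ref{T:1} to upgrade this conjugacy to an honest equality. First I would record the general principle, valid for arbitrary norms: if $L$ is a linear bijection with $\|L(x)\|_2=\|x\|_1$ for every $x$, then substituting $y=L(x)$ shows that $h\in\mathcal{K}_2$ holds if and only if $L^{-1}hL\in\mathcal{K}_1$, that is,
\[
\mathcal{K}_2=L\,\mathcal{K}_1\,L^{-1}.
\]
Thus the mere existence of an isometric isomorphism forces $\mathcal{K}_1$ and $\mathcal{K}_2$ to be conjugate inside $GL(n^2-1,\mathbb{R})$.

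The heart of the argument, and the step I expect to be the main obstacle, is to pass from conjugacy to equality; this is the only place where the full strength of Theorem~\ref{T:1} enters, since for general norms isometric spaces have merely conjugate, not equal, isometry groups. By Theorem~\ref{T:1} each $\mathcal{K}_i$ equals one of the two explicit groups $PSU(n)\rtimes\langle\mathbb{Z}_2,\sigma\rangle$ and $O(n^2-1,\mathbb{R})$. Their identity components are $PSU(n)$, of dimension $n^2-1$, and $SO(n^2-1,\mathbb{R})$, of dimension $(n^2-1)(n^2-2)/2$; for $n\geq 3$ these differ, so the two groups are not isomorphic as Lie groups. As conjugation by $L$ is an isomorphism and hence preserves dimension, $\mathcal{K}_1$ and $\mathcal{K}_2$ must be of the same type, and since each type is a single fixed subgroup of $GL(n^2-1,\mathbb{R})$ they coincide: $\mathcal{K}_1=\mathcal{K}_2=:\mathcal{K}$.

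Once equality is in hand, $L$ normalizes $\mathcal{K}$, so $L\in N(\mathcal{K})$. The normalizer computations already performed in the proof of Theorem~\ref{T:1} give $N(\mathcal{K})=\mathbb{R}^{\times}\cdot\mathcal{K}$ in either case; since $-\mathrm{Id}\in\mathcal{K}$ (it is the nontrivial element of $\mathbb{Z}_2$ in case (i), and an orthogonal map in case (ii)) the sign can be absorbed, whence $N(\mathcal{K})=\mathbb{R}^{\times}_{+}\cdot\mathcal{K}$. Writing $L=\gamma g$ with $\gamma>0$ and $g\in\mathcal{K}$ immediately yields $\gamma^{-1}L=g\in\mathcal{K}_1$, and feeding this back into $\|L(x)\|_2=\|x\|_1$ while using $g\in\mathcal{K}_2$ gives $\|x\|_1=\gamma\|g(x)\|_2=\gamma\|x\|_2$, i.e.\ $\|.\|_1=\gamma\|.\|_2$. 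This settles the forward implication. For the converse I would simply run the last computation in reverse: assuming $\mathcal{K}_1=\mathcal{K}_2$, $\|.\|_1=\gamma\|.\|_2$ and $\gamma^{-1}L\in\mathcal{K}_1$, one has $\|L(x)\|_2=\gamma^{-1}\|L(x)\|_1=\|\gamma^{-1}L(x)\|_1=\|x\|_1$, so that $L$ is an isometric isomorphism.
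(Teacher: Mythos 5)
Your proof is correct and is essentially the paper's intended argument: the paper states this corollary without proof as a ``direct consequence'' of Theorem~\ref{T:1}, and your derivation (conjugacy of isometry groups, rigidity of the two possibilities in Theorem~\ref{T:1} via dimension count, then the normalizer computation $N(\mathcal{K})=\mathbb{R}^{\times}\cdot\mathcal{K}$ already established in its proof) is exactly the derivation being left to the reader. The only point worth making explicit is that both norms are implicitly assumed unitary similarity invariant (otherwise Theorem~\ref{T:1} does not apply and the statement fails), and that $N(\mathcal{K})\leq N(\mathcal{K}_0)$ because conjugation preserves identity components, which is what lets you quote the paper's normalizers of $PSU(n)$ and $SO(n^2-1,\mathbb{R})$.
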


Combining Theorem~\ref{T:1} with the result of Nagy \cite{nagy} on automatical linearity (up to a translation) of isometries on self-adjoint traceless matrices yields the following

\begin{corollary}
Suppose that $L$ is an isometry with respect to any unitary similarity invariant norm on $H^0_{n}$. Then there exists a unitary matrix $U\in U(n)$, $B\in H^0_{n}$ and a real number $\eta \in \{-1,1\}$ such that
\[
L(A)=\eta UAU^{-1}+B, \qquad A \in H^0_{n}
\]
or
\[
L(A)=\eta U\sigma(A)U^{-1}+B, \qquad A \in H^0_{n}.
\]
\end{corollary}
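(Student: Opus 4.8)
The plan is to combine two ingredients already at our disposal: Nagy's automatic-linearity theorem, quoted in the introduction, and Theorem~\ref{T:1}. Since the hard structural work is done by those two results, the corollary is essentially a matter of unwinding the group-theoretic description of $\mathcal{K}$ into explicit normal forms, and the argument should be short.

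First I would dispose of the affine hypothesis. By Nagy's result the isometry $L$ is automatically surjective (via invariance of domain), hence, by the Mazur--Ulam theorem, affine. Writing $B=L(0)$ and $L_0(A)=L(A)-B$ gives a decomposition $L(A)=L_0(A)+B$ in which $L_0$ is a genuine \emph{linear} isometry of the norm, being the composite of $L$ with a (norm-preserving) translation. Because $L$ maps $H_n^0$ into itself and $0\in H_n^0$, the translation vector $B=L(0)$ lies in $H_n^0$, as required. Thus $L_0$ is a surjective linear isometry, so $L_0\in\mathcal{K}$, and Theorem~\ref{T:1} applies to it.

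The substantive step is then to translate alternative (i), namely $\mathcal{K}=PSU(n)\rtimes\langle\mathbb{Z}_2,\sigma\rangle$, into the two displayed expressions. Every element of $PSU(n)$ acts as $A\mapsto UAU^{-1}$ for some $U\in U(n)$, since conjugation depends on $U$ only up to a scalar and hence $U(n)$ and $SU(n)$ induce the same maps; the generator of $\mathbb{Z}_2$ acts as the scalar $A\mapsto-A$, supplying the factor $\eta=-1$; and $\sigma$ acts as $A\mapsto\sigma(A)$. Because $\sigma$ normalizes $PSU(n)$ — this is the relation $\sigma(UAU^{-1})=(U')^{-1}\sigma(A)U'$ recorded in the proof of Theorem~\ref{T:1} — every word in these generators collapses to a single coset, so $L_0$ has either the form $A\mapsto\eta UAU^{-1}$ or the form $A\mapsto\eta U\sigma(A)U^{-1}$ with $\eta\in\{-1,1\}$ and $U\in U(n)$. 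Reinstating the translation $B$ then yields exactly the two asserted formulas for $L$.

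The one point that needs care, and the step I expect to be the real obstacle, is to confirm that we are in alternative (i) rather than (ii). When the norm is a multiple of the Frobenius norm, Theorem~\ref{T:1} gives $\mathcal{K}=O(n^2-1,\mathbb{R})$, whose members need not be of the stated shape; the normal forms therefore describe the isometries precisely in the structured case where the norm is not proportional to the Frobenius norm, and the reduction above is complete for every such norm. The remaining verifications — that conjugation by $U(n)$ realizes all of $PSU(n)$, that the generators act as indicated, and that $B\in H_n^0$ — are routine.
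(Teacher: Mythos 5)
Your proposal is correct and takes exactly the paper's route: the paper's entire proof of this corollary is the single sentence ``combining Theorem~\ref{T:1} with the result of Nagy on automatic linearity (up to a translation),'' which is precisely your decomposition $L(A)=L_0(A)+L(0)$ with $L_0\in\mathcal{K}$, followed by unwinding case (i) of Theorem~\ref{T:1} into the two normal forms via the normalizing relation for $\sigma$. Your closing caveat is also legitimate and is a point the paper silently glosses over: when the norm is a multiple of the Frobenius norm (and $n\geq 3$), $\mathcal{K}=O(n^2-1,\mathbb{R})$ contains maps not of the stated shape, so the corollary is only true --- and your proof only complete --- for norms not proportional to the Frobenius norm, or under the reading that $L$ is an isometry for \emph{every} unitary similarity invariant norm.
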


By using the result on compact overgroups of $PSU(n)$ (listed in the previous section) one can solve easily various linear preserver problems on the set of self-adjoint traceless matrices including certain generalized numerical ranges and numerical radii, as well. For a fixed $C\in H_n^0$ the $C$-numerical range and the $C$-numerical radius of an $A\in H_n^0$ is defined by
\[
W_C(A)=\{\tr AX:\quad X\in  \tilde{\mathcal{U}}(C)  \}
\]
and
\[
r_C(A)=\{|\lambda|: \quad \lambda \in W_C(A)\},
\]
respectively.

\begin{theorem}
Assume that $L$ is a linear map on $H_n^0$ satisfying either $W_C(L(A))=W_C(A)$ or $r_C(L(A))=r_C(A)$ for all $A\in H_n^0$. Then there exist $U\in U(n)$ and $\eta \in \{-1,1\}$ such that $L$ is one of the following forms:
\begin{itemize}
    \item[(i)] $L(A)=\eta UAU^{-1}, \quad A\in H_n^0$;
    \item[(ii)] $L(A)=\eta U\sigma(A)U^{-1}, \quad A\in H_n^0$.
\end{itemize}
\end{theorem}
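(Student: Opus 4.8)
The plan is to reduce both hypotheses to the single assertion that $L$ is a linear isometry of the $C$-numerical radius $r_C$, and then to feed this into Theorem~\ref{T:1}. First I would note that we may assume $C\neq 0$ (otherwise $W_C\equiv\{0\}$ and $r_C\equiv 0$ carry no information and the conclusion is vacuous). Whichever of the two assumptions holds, in either case $r_C(L(A))=r_C(A)$ for all $A$: indeed $r_C(A)=\max\{|\lambda|:\lambda\in W_C(A)\}$, so equality of the sets $W_C(L(A))$ and $W_C(A)$ forces equality of their suprema of moduli. Thus both hypotheses collapse to $r_C$-preservation. Next I would verify that $r_C$ is genuinely a \emph{unitary similarity invariant norm} on $H_n^0$. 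Invariance is immediate, since as $X$ runs over $\tilde{\mathcal{U}}(C)$ so does $U^{-1}XU$, whence $r_C(UAU^{-1})=\max_{X\in\tilde{\mathcal{U}}(C)}|\tr(A\,U^{-1}XU)|=r_C(A)$. For positive-definiteness I would invoke irreducibility: the linear span of $\tilde{\mathcal{U}}(C)$ is a nonzero $PSU(n)$-invariant subspace (nonzero because $C\neq 0$), hence all of $H_n^0$; so $r_C(A)=0$ gives $\tr(AX)=0$ for every $X\in H_n^0$, and nondegeneracy of the trace form yields $A=0$.

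With $r_C$ a norm and $r_C(L(A))=r_C(A)$, the map $L$ is injective, hence invertible, and therefore lies in the isometry group $\mathcal{K}$ of $r_C$. By Theorem~\ref{T:1} there are only two possibilities: either $\mathcal{K}=PSU(n)\rtimes\langle\mathbb{Z}_2,\sigma\rangle$, in which case every element of $\mathcal{K}$—and so $L$ in particular—has exactly one of the two asserted shapes $A\mapsto\eta UAU^{-1}$ or $A\mapsto\eta U\sigma(A)U^{-1}$ with $U\in U(n)$ and $\eta\in\{-1,1\}$; or else $\mathcal{K}=O(n^2-1,\mathbb{R})$. It therefore remains only to exclude this second alternative, and this is the crux of the argument.

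By the remark following Theorem~\ref{T:1}, the case $\mathcal{K}=O(n^2-1,\mathbb{R})$ occurs precisely when $r_C$ is induced by an inner product; since a unitary similarity invariant inner product on $H_n^0$ must be a multiple of the trace form (irreducibility together with Schur's lemma), this means $r_C=\gamma\|\cdot\|_F$ for some $\gamma>0$, where $\|\cdot\|_F$ is the Frobenius norm. I would rule this out geometrically. Dualizing, $r_C(A)=\max_{X\in\tilde{\mathcal{U}}(C)}|\tr(AX)|$ is the support function of the symmetric compact convex body $K:=\mathrm{conv}\bigl(\tilde{\mathcal{U}}(C)\cup(-\tilde{\mathcal{U}}(C))\bigr)$, so $r_C=\gamma\|\cdot\|_F$ would force $K$ to be a Euclidean ball. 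Every extreme point of $K$ lies in $\tilde{\mathcal{U}}(C)\cup(-\tilde{\mathcal{U}}(C))$, and all of these points have the same Frobenius length $\|C\|_F$; hence if $K$ were the ball of radius $\|C\|_F$, its bounding sphere, of dimension $n^2-2$, would have to be contained in $\tilde{\mathcal{U}}(C)\cup(-\tilde{\mathcal{U}}(C))$. But the orbit has real dimension $n^2-\sum_i n_i^2\le n^2-n$, where the $n_i$ are the multiplicities of the eigenvalues of $C$ (the bound is attained for regular $C$), and $n^2-n<n^2-2$ exactly because $n\ge 3$. This dimension gap makes the containment impossible, so $K$ is not a ball, $r_C$ is not a multiple of $\|\cdot\|_F$, and the alternative $\mathcal{K}=O(n^2-1,\mathbb{R})$ cannot arise. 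Consequently $\mathcal{K}=PSU(n)\rtimes\langle\mathbb{Z}_2,\sigma\rangle$ and $L$ has one of the two stated forms. I expect the reduction to Theorem~\ref{T:1} and the norm verification to be entirely routine; the only genuinely delicate step is this final dimension count excluding the Euclidean case.
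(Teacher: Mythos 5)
Your proposal is correct, but it is worth noting that the paper contains no actual proof of this theorem: it appears in the ``Additional results'' section with only the remark that it can be solved ``by using the result on compact overgroups of $PSU(n)$'', i.e.\ the intended route is to apply the Section~2 machinery (compactness of the preserver group, the overgroup list, the normalizer computation) directly to the group of $W_C$- resp.\ $r_C$-preservers. Your route instead factors through Theorem~\ref{T:1}: reduce both hypotheses to $r_C$-isometry, verify that $r_C$ is a unitary similarity invariant norm, and then exclude alternative (ii). The real content you supply --- which the paper's one-line sketch leaves entirely implicit, and which is needed in \emph{either} approach --- is the proof that $r_C$ is not a multiple of the Frobenius norm; your support-function/Milman/dimension-count argument ($\dim\tilde{\mathcal{U}}(C)\le n^2-n<n^2-2$ for $n\ge 3$) is sound. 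Two streamlinings are available: first, you do not need the unproved remark following Theorem~\ref{T:1}, since $\mathcal{K}=O(n^2-1,\mathbb{R})$ acts transitively on Frobenius spheres and hence directly forces $r_C=\gamma\|\cdot\|_F$; second, the exclusion itself can be done without convex geometry, because $r_C=\|C\|_F\,\|\cdot\|_F$ would, by the equality case of Cauchy--Schwarz applied to $r_C(A)=\|A\|_F\|C\|_F$, force every $A\in H_n^0$ with $\|A\|_F=\|C\|_F$ to lie in $\tilde{\mathcal{U}}(C)\cup(-\tilde{\mathcal{U}}(C))$, i.e.\ to have spectrum $\pm\operatorname{spec}(C)$, whereas for $n\ge 3$ the set of admissible spectra (trace zero, fixed Frobenius norm) is an $(n-2)$-sphere and hence infinite. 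Two small caveats: for $C=0$ the statement is not vacuous but false (every linear map preserves $W_0$), so $C\neq 0$ must be read as an implicit hypothesis rather than a harmless reduction; and both Theorem~\ref{T:1} and your dimension gap require $n\ge 3$, so strictly speaking the case $n=2$ (where the theorem still holds, since there the isometry group of every unitary similarity invariant norm is $O(3)=PSU(2)\times\mathbb{Z}_2$, all of whose elements have form (i)) needs a separate sentence.
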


\section{The case of skew-symmetric matrices}

Denote $K_n(\mathbb{F})$ the set of $n$ by $n$ skew-symmetric matrices over the field $\mathbb{F}$. Let $A':=\tau(A)$ be the transpose map and denote $A^*:=\psi(A)$ the operation on $K_4(\mathbb{F})$ defined by
\[
\begin{pmatrix}
0 & a_{12} & a_{13} & a_{14} \\
-a_{12} & 0 & a_{23} & a_{24} \\
-a_{13} & -a_{23} & 0 & a_{34} \\
-a_{14} & -a_{24} & -a_{34} & 0
\end{pmatrix}^*=\begin{pmatrix}
0 & a_{12} & a_{13} & a_{23} \\
-a_{12} & 0 & a_{14} & a_{24} \\
-a_{13} & -a_{14} & 0 & a_{34} \\
-a_{23} & -a_{24} & -a_{34} & 0
\end{pmatrix}.
\]
Let $PSO(n,\mathbb{R})$ be the image of $SO(n,\mathbb{R})$ under the adjoint representation
\[
Ad: SO(n,\mathbb{R})\to GL(K_n(\mathbb{R})), \quad Q\mapsto QAQ^{-1}=QAQ'.
\]
In \cite{chi} the following result was achieved and presented by Li and Tsing.

\begin{theorem}\label{CK}
Assume that $n\geq 3$. Let $\mathcal{K}$ be the isometry group of an orthogonal congruence invariant norm on $K_n(\mathbb{R})$ which is not a constant multiple of the Frobenius norm. Then one of the following holds:
\begin{itemize}
    \item [(i)] $\mathcal{K}=\langle PSO(n,\mathbb{R}), \tau \rangle$;
    \item [(ii)] $n=4$ and $\mathcal{K}=\langle PSO(n,\mathbb{R}), \tau, \psi \rangle$.
\end{itemize}
\end{theorem}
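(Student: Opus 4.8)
The plan is to mirror exactly the overgroup strategy used for Theorem~\ref{T:1}, transposing every step from the self-adjoint traceless setting to the skew-symmetric one. The underlying vector space is now $K_n(\mathbb{R})$, on which $PSO(n,\mathbb{R})$ acts irreducibly via the adjoint representation $Q\mapsto QAQ'$; this representation is the exterior square $\wedge^2$ of the natural $n$-dimensional module, so the ambient space has dimension $\binom{n}{2}$. First I would take $\Gamma$ to be any compact Lie group with $PSO(n,\mathbb{R})\leq \Gamma \leq GL(\binom{n}{2},\mathbb{R})$ and, exactly as before, invoke the structure theorem for compact connected Lie groups to reduce the identity component $\Gamma_0$ to a semisimple group: absolute irreducibility of $\wedge^2$ together with Schur's lemma forces the central torus into the scalars, and the intersection of the scalars with the orthogonal group is finite, so $\Gamma_0$ is semisimple.

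Next I would classify the compact semisimple overgroups of $PSO(n,\mathbb{R})$. Complexifying, $PSO(n,\mathbb{R})$ sits inside $PSO(n,\mathbb{C})=SO(n,\mathbb{C})/\{\pm I\}$ acting on $\wedge^2(\mathbb{C}^n)\cong so(n,\mathbb{C})$, which is precisely the adjoint representation of $SO(n,\mathbb{C})$. The compact semisimple overgroups in the complex orthogonal/special-linear group can be read off from the same Dynkin-type classification cited for the self-adjoint case (the analogue of \cite[Corollary 2.3]{guralnick}); the generic answer is that $\Gamma_0=PSO(n,\mathbb{R})$ or the full $SO(\binom{n}{2},\mathbb{R})$ (the latter giving the Frobenius-norm case, which is excluded by hypothesis), while the only exceptional coincidence is $n=4$, where $\wedge^2(\mathbb{C}^4)$ is $6$-dimensional and $so(4)\cong so(3)\oplus so(3)$ splits, producing the extra symmetry $\psi$ swapping the two $3$-dimensional factors. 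As with the $SU(n^2-1)$ and $\Lambda$ entries in the self-adjoint table, any candidate overgroup whose representation is not realizable over $\mathbb{R}$ or does not preserve $K_n(\mathbb{R})$ is discarded. Since we have excluded the Frobenius norm, only $\Gamma_0=PSO(n,\mathbb{R})$ survives.

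It then remains to compute the normalizer $N(PSO(n,\mathbb{R}))$ in $GL(\binom{n}{2},\mathbb{R})$, via the fundamental chain $\Gamma_0\leq \Gamma\leq N(\Gamma)\leq N(\Gamma_0)$. I would argue exactly as in the self-adjoint proof: if $L$ normalizes $PSO(n,\mathbb{R})$, then for every $Q\in SO(n,\mathbb{R})$ there is $R\in SO(n,\mathbb{R})$ with $L(QAQ')=R\,L(A)\,R'$, so $L$ carries orthogonal congruence orbits to orthogonal congruence orbits bijectively. An orbit-characterization of congruence-preserving maps on $K_n(\mathbb{R})$ (the skew analogue of \cite[Theorem 3.2]{orbit}, or equivalently the classical Morita/Frobenius-type description) then forces $L(A)=\eta\, QAQ'$ for some scalar $\eta$ and $Q\in O(n,\mathbb{R})$, yielding $N(PSO(n,\mathbb{R}))=\langle \mathbb{R}^{\times}, PSO(n,\mathbb{R}), \tau\rangle$, since transposition $\tau$ normalizes the adjoint action. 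Because an isometry group of any norm is compact, the scalar factor collapses to $\{\pm 1\}$, and because $PSO(n,\mathbb{R})$ already acts as $SO$ and the relevant sign is absorbed, $\mathcal{K}$ is forced into case (i), or into case (ii) when $n=4$ and the norm happens to be invariant under the extra involution $\psi$.

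The main obstacle I anticipate is the $n=4$ exceptional case and, more generally, pinning down precisely which congruence-preserving maps arise; unlike the self-adjoint situation, the low-dimensional isomorphism $so(4)\cong so(3)\oplus so(3)$ genuinely enlarges the normalizer, and one must verify that the extra operator is realized by the explicit involution $\psi$ displayed above rather than by some congruence. Concretely, the delicate point is showing that $\psi$ is \emph{not} already induced by an element of $\langle PSO(4,\mathbb{R}),\tau\rangle$ (so that case (ii) is strictly larger) while simultaneously confirming that $\psi$ does normalize $PSO(4,\mathbb{R})$ and preserves the relevant norms; this is exactly the skew-symmetric analogue of the role played by $\Lambda$ in the self-adjoint table, and it is where the bulk of the genuinely new verification lies.
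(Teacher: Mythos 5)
Your overall strategy---reduce to a compact semisimple identity component, classify the compact semisimple overgroups of $PSO(n,\mathbb{R})$, then pin down the normalizer via an orbit-preservation theorem---is exactly the paper's strategy, and for $n\neq 4$ your outline matches it step for step (the paper's orbit tool is Li--Tsing's own result, \cite[Theorem 4.2]{chi}, applied after observing via the Youla decomposition that orthogonal congruence orbits are precisely the sets of skew-symmetric matrices with fixed singular values). The genuine gap is exactly where you defer: you never carry out the $n=4$ case, and your formulation of its conclusion is wrong in a way that matters. You write that $\mathcal{K}$ falls into case (ii) ``when $n=4$ and the norm happens to be invariant under the extra involution $\psi$.'' The theorem asserts something stronger: for $n=4$, case (ii) \emph{always} holds, because $\psi$ is automatically an isometry for \emph{every} orthogonal congruence invariant norm. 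The paper's reason is a short computation you are missing: $A$ and $\psi(A)$ have the same characteristic polynomial (the swap $a_{14}\leftrightarrow a_{23}$ preserves both $\sum_{i<j} a_{ij}^2$ and the Pfaffian $a_{12}a_{34}-a_{13}a_{24}+a_{14}a_{23}$), hence the same singular values, hence the same norm. Without this, the dichotomy you end with does not reproduce the statement being proved.

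Two further ingredients of the $n=4$ analysis are named in your proposal but not supplied. First, your reduction and classification steps both invoke irreducibility of $\wedge^2$, which fails at $n=4$: $K_4(\mathbb{C})$ splits into two $3$-dimensional summands under $PSO(4,\mathbb{C})$, so both the Schur-type argument and the appeal to a Dynkin-style list need modification there. The paper handles this separately in Theorem~\ref{T:2}, using \cite[Theorem 2.2(d)]{G2} to see that the only semisimple \emph{non-simple} candidate is $PGL(2,\mathbb{C})\times PGL(2,\mathbb{C})\cong PSO(4,\mathbb{C})$ itself, so no new connected overgroup arises at $n=4$; the enlargement lives entirely in the component group of the normalizer. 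Second, to see that $\psi$ actually normalizes $PSO(4,\mathbb{R})$, the paper either computes $\pi_0\bigl(N_{GL(V)}(G)\bigr)\cong Aut(A_1\times A_1,\lambda)=\mathbb{Z}/2\mathbb{Z}\neq 1$, so the graph automorphism swapping the two $A_1$ factors is realized inside $GL(V)$ and (given the upper bound $N_{GL(V)}(G)\leq\langle\mathbb{R}^\times,G,\psi\rangle$ from \cite[Theorem 4.2]{chi}, which for $n=4$ already includes the $\psi$-forms) must be realized by $\psi$; or, alternatively, it combines that same upper bound with the fact that $\psi$ is an isometry, which already forces $\mathcal{K}=\langle PSO(4,\mathbb{R}),\tau,\psi\rangle$. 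Your proposal identifies these as ``the delicate point'' but provides no mechanism for any of them, so the case that distinguishes conclusion (ii) from conclusion (i) remains unproved.
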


As far as the authors know, there is not any complete proof available in the literature. By using the proof techniques appearing in the previous section we could present a short one. Since the machinery is rather close in spirit to that of Section 2, the interested reader should be able to fill in the details.

Set $m=n(n-1)/2$, that is, the dimension of $K_n(\mathbb{F})$ and introduce the following groups:
\begin{itemize}
    \item [(i)] $SO(m,\mathbb{R})$: the special orthogonal group on $K_n(\mathbb{R})$ (with respect to the trace form)
    \item [(ii)] $SO(m,\mathbb{C})$: the special orthogonal group on $K_n(\mathbb{C})$
    \item [(iii)] $SL(m,\mathbb{C})$: the special linear group on $K_n(\mathbb{C})$
    \item [(iv)] $SU(m)$: the special unitary group on $K_n(\mathbb{C})$
\end{itemize}
We note that if $n$ is odd, then $PSO(n,\mathbb{R})$ is isomorphic to $SO(n,\mathbb{R})$. Otherwise, if $n$ is even, $PSO(n,\mathbb{R})$ is isomorphic to $SO(n,\mathbb{R})/\langle -I \rangle$. Furthermore, the groups $PSO(n,\mathbb{R})$, $SO(m,\mathbb{R})$ and $SU(m)$ have Zariski closures $PSO(n,\mathbb{C})$, $SO(m,\mathbb{C})$ and $SL(m,\mathbb{C})$, respectively.

\begin{theorem}\label{T:2}
If $\Gamma$ is a compact semisimple subgroup of $GL(K_n(\mathbb{R}))$ which contains $PSO(n,\mathbb{R})$, then $\Gamma=PSO(n,\mathbb{R})$ or $\Gamma=SO(K_n(\mathbb{R}))$.
\end{theorem}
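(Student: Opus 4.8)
The plan is to run exactly the argument of Section 2 one dimension up: complexify the module, reduce the classification of compact semisimple overgroups to Dynkin's list of connected semisimple \emph{complex} overgroups (as recorded in \cite{guralnick}), and then throw away those members of the list that are not defined over $\mathbb{R}$. First I would pin down the complexified representation. Since $K_n(\mathbb{R})\otimes\mathbb{C}=K_n(\mathbb{C})=\mathfrak{so}(n,\mathbb{C})$ and the action $Q\mapsto QAQ'$ of $PSO(n,\mathbb{R})$ is precisely the adjoint action, the representation at hand is the adjoint representation of $SO(n,\mathbb{C})$, that is, the exterior square $\Lambda^2$ of the natural module $\mathbb{C}^n$. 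For $n\neq 4$ the algebra $\mathfrak{so}(n,\mathbb{C})$ is simple, so this representation is absolutely irreducible; moreover it preserves the Killing (equivalently, trace) form, which is symmetric and nondegenerate, so the image already sits inside $SO(m,\mathbb{C})$, exactly as $PSU(n)$ embeds in $O(n^2-1,\mathbb{R})$ in Section 2.

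Passing to Zariski closures, a compact semisimple overgroup $\Gamma$ of $PSO(n,\mathbb{R})$ in $GL(K_n(\mathbb{R}))$ has closure $\Gamma(\mathbb{C})$ a connected semisimple complex overgroup of $PSO(n,\mathbb{C})$ in $GL(m,\mathbb{C})$, and $\Gamma$ is recovered as the compact real form leaving $K_n(\mathbb{R})$ invariant. I would then invoke \cite[Corollary 2.3]{guralnick} (ultimately Dynkin): apart from small-rank coincidences the only connected semisimple complex groups lying between $PSO(n,\mathbb{C})$ and $GL(m,\mathbb{C})$ are $SO(m,\mathbb{C})$ and $SL(m,\mathbb{C})$. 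As in Section 2, $SL(m,\mathbb{C})$ has compact real form $SU(m)$, whose natural representation on $K_n(\mathbb{C})$ is not defined over $\mathbb{R}$, so it cannot preserve the real subspace $K_n(\mathbb{R})$ and is discarded; on the other hand $SO(m,\mathbb{C})$ has compact real form $SO(m,\mathbb{R})=SO(K_n(\mathbb{R}))$, which does preserve $K_n(\mathbb{R})$. This leaves exactly $PSO(n,\mathbb{R})$ and $SO(K_n(\mathbb{R}))$, as claimed in Theorem~\ref{T:2}.

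The main obstacle is the two low-dimensional exceptions. For $n=6$ one has $\mathfrak{so}(6,\mathbb{C})\cong\mathfrak{sl}(4,\mathbb{C})$, so this is literally the $n=4$ row of the table in Section 2 and produces the extra complex overgroup $\Lambda(\mathbb{C})$; exactly as there, the relevant representation (the exterior square of the natural $6$-dimensional module) is not defined over $\mathbb{R}$, so $\Lambda$ does not leave $K_6(\mathbb{R})$ invariant and contributes no new real overgroup. For $n=4$ the representation is reducible: $K_4(\mathbb{C})=\mathfrak{so}(4,\mathbb{C})$ splits into its self-dual and anti-self-dual summands, whence $PSO(4,\mathbb{R})\cong SO(3,\mathbb{R})\times SO(3,\mathbb{R})$. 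Here I would argue directly that a connected semisimple overgroup either preserves the splitting — in which case its complexification lies in $SL(3,\mathbb{C})\times SL(3,\mathbb{C})$, whose compact form $SU(3)\times SU(3)$ is not realized on the real $6$-dimensional space and so is excluded — or acts irreducibly, forcing it into $SO(6,\mathbb{C})$ or $SL(6,\mathbb{C})$ and returning us to the generic conclusion.

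Finally, I would stress that the symmetry $\psi$ interchanging the two summands for $n=4$ is only a discrete element: it normalizes $PSO(4,\mathbb{R})$ but does not enlarge the connected semisimple part, so it plays no role in Theorem~\ref{T:2} and surfaces only afterwards, in the normalizer computation that yields case (ii) of Theorem~\ref{CK}, in precisely the way the Cartan involution $\sigma$ appeared in Section 2. With that understood, the classification collapses to the two stated groups for every $n\geq 3$.
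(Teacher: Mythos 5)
Your overall strategy coincides with the paper's: complexify, classify the connected semisimple complex overgroups by Dynkin-type results, discard the ones whose compact forms cannot leave $K_n(\mathbb{R})$ invariant, and treat $n=4$ separately. However, the central classification step, as you state it, is false, and the result you cite cannot support it. You claim that ``apart from small-rank coincidences the only connected semisimple complex groups lying between $PSO(n,\mathbb{C})$ and $GL(m,\mathbb{C})$ are $SO(m,\mathbb{C})$ and $SL(m,\mathbb{C})$,'' with an extra overgroup appearing only at the exceptional isomorphism $n=6$. In fact an extra intermediate group is present for \emph{every} $n\geq 5$: identifying $K_n(\mathbb{C})$ with $\Lambda^2\mathbb{C}^n$, the inclusion $SO(n,\mathbb{C})\subset SL(n,\mathbb{C})$ gives a chain $PSO(n,\mathbb{C})\subsetneq SL(n,\mathbb{C})/\langle -I\rangle\subsetneq SL(m,\mathbb{C})$ in which all groups act irreducibly; this is exactly the Dynkin Table 5 inclusion recorded in the paper's table as the row $SL(n,\mathbb{C})/\langle -I\rangle$, $SU(n)/\langle -I\rangle$, ``no''. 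Your $n=6$ exception is just this same group viewed through $PSO(6,\mathbb{C})\cong PSL(4,\mathbb{C})$; it does not disappear for other values of $n$. The group is ultimately harmless, since for $n\geq 5$ the $SL(n,\mathbb{C})$-module $\Lambda^2\mathbb{C}^n$ has highest weight $\omega_2\neq\omega_{n-2}$, hence is not self-dual and not defined over $\mathbb{R}$, so its compact form preserves no real form of $K_n(\mathbb{C})$ --- the very argument you deploy at $n=6$ --- but a correct proof must list this group for all $n$ before discarding it, which your proposal does not do.

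Relatedly, \cite[Corollary 2.3]{guralnick} classifies overgroups of $PSU(n)$ in $GL(n^2-1,\mathbb{C})$, i.e.\ overgroups of $PSL(n,\mathbb{C})$ acting on its adjoint module; it says nothing about $PSO(n,\mathbb{C})$ acting on $K_n(\mathbb{C})$, so it cannot be invoked here. The paper instead applies Dynkin's theorem (Theorem~\ref{T:D} and Table 5 of \cite{dynkin}) directly, after first arguing that any semisimple complex overgroup must be \emph{simple} (because $K_n(\mathbb{C})$ is irreducible and tensor indecomposable already under $PSO(n,\mathbb{C})$ when $n>4$); this simplicity reduction is a hypothesis of Dynkin's theorem and is missing from your argument as well. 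On the positive side, your $n=4$ analysis (either the overgroup preserves the splitting of $K_4$ into the two three-dimensional summands, or it acts irreducibly) is a legitimate alternative to the paper's appeal to \cite[Theorem 2.2(d)]{G2}, though the split case needs more care than ``its complexification lies in $SL(3,\mathbb{C})\times SL(3,\mathbb{C})$, whose compact form $SU(3)\times SU(3)$ is not realized'': intermediate subgroups of the product must be ruled out, e.g.\ by the maximality of $SO(3,\mathbb{C})$ in $SL(3,\mathbb{C})$, or more simply by noting that a compact connected group preserving both real three-dimensional summands is conjugate into $SO(3,\mathbb{R})\times SO(3,\mathbb{R})$ and hence equals $PSO(4,\mathbb{R})$.
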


In the proof we shall apply the following classification result of Dynkin on contains of irreducible subgroups of $SL(N)$.

\begin{theorem}\cite[Theorem 2.3]{dynkin} \label{T:D}
Table 5 of \cite{dynkin} gives a complete list of all inclusion types $G^*\subsetneq G \subsetneq SL(N,\mathbb{C})$ such that
\begin{itemize}
\item[(i)] $G^*,G$ are distinct irreducible Lie subgroups;
\item[(ii)] $G^*$ is connected and $G$ is simple;
\item[(iii)] $G$ is not conjugate in $GL(N,\mathbb{C})$ to any of $SL(N,\mathbb{C})$, $SO(N,\mathbb{C})$ or $Sp(N,\mathbb{C})$.
\end{itemize}
\end{theorem}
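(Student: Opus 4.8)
Since Theorem~\ref{T:D} is a classical classification theorem of Dynkin, a self-contained proof is beyond the present scope; what I outline here is the strategy by which such a classification is obtained. The plan is to pass from the groups to the corresponding representations over $\mathbb{C}$. Writing $V=\mathbb{C}^N$, the hypothesis that $G$ is simple and acts irreducibly with $G\subsetneq SL(N,\mathbb{C})$ says exactly that $V$ is a nontrivial irreducible representation of $G$, hence determined by a dominant integral highest weight $\lambda$. Condition (iii) requires that this representation not be the $N$-dimensional defining representation of $SL(N,\mathbb{C})$, $SO(N,\mathbb{C})$ or $Sp(N,\mathbb{C})$; that is, $G$ is a genuinely non-classical simple subgroup. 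Finally, $G^{*}\subsetneq G$ with both irreducible is precisely the statement that the restriction $V|_{G^{*}}$ remains irreducible. Thus the problem is converted into the following: classify the pairs consisting of a highest weight $\lambda$ and a proper connected subgroup $G^{*}\subset G$ for which the irreducible $G$-module $V_{\lambda}$ restricts irreducibly to $G^{*}$, after removing the three classical pairs $(\lambda,G)$.

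First I would invoke the indispensable prerequisite: Dynkin's classification of the maximal connected subgroups of a simple $G$. These divide into regular (R-) subgroups, produced by the Borel--de Siebenthal procedure of deleting nodes from the extended Dynkin diagram, and non-regular (S-) subgroups, which are the images of the remaining irreducible representations. Any chain $G^{*}\subsetneq G$ refines through maximal subgroups, so it suffices to test irreducibility of the branching $V_{\lambda}|_{M}$ as $M$ ranges over the (finitely many types of) maximal subgroups of $G$, and then to reassemble the admissible chains. This turns an a priori infinite search into a structured induction over the maximal-subgroup types.

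The heart of the argument is a numerical elimination based on invariants that must be compatible along the chain. The Weyl dimension formula fixes $\dim V_{\lambda}=N$ from $\lambda$, and irreducibility of $V_{\lambda}|_{G^{*}}$ forces $N$ to be the dimension of a single irreducible $G^{*}$-module, which already rules out all but small $\lambda$. Dynkin's index of an embedding is a positive integer and is multiplicative along $G^{*}\subset G\subset SL(N,\mathbb{C})$, giving sharp divisibility restrictions. And the rank can only drop, while the weight system of $V_{\lambda}$, projected onto the smaller Cartan subalgebra, must reassemble into the weight system of one irreducible $G^{*}$-module. Running these three constraints against each maximal-subgroup type leaves only finitely many candidate weights, each of which is then settled by an explicit branching computation; the survivors are exactly the entries of Table~5 of \cite{dynkin}.

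The main obstacle is twofold. Conceptually, the whole argument is relative to the classification of maximal subgroups (the R/S dichotomy), which is itself a substantial theorem, so the proof is not self-contained in any light sense. Computationally, the delicate step is to certify, for each borderline candidate, that $V_{\lambda}|_{G^{*}}$ is genuinely irreducible rather than merely equidimensional with some irreducible $G^{*}$-module: equal dimensions can arise by coincidence, so one must verify that the projected weights, with their multiplicities, actually assemble into a single $G^{*}$-irreducible. It is this case-by-case verification, in the low-rank and small-weight situations, that constitutes the real labour and that Dynkin records in his table.
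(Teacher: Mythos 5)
The paper offers no proof of this statement: it is imported verbatim from Dynkin (Theorem 2.3 of \cite{dynkin}) and used as a black box, so there is no in-paper argument to compare yours against, and a citation is the intended justification here. That said, your outline is a faithful description of how Dynkin's classification actually proceeds: translate the pair $(G,V)$ into a dominant highest weight $\lambda$, reduce the chain $G^*\subsetneq G$ to branching through maximal connected subgroups (this reduction is valid because if $V|_{G^*}$ is irreducible then $V|_M$ is irreducible for every intermediate $M$, any $M$-submodule being in particular a $G^*$-submodule), sieve numerically, and settle the survivors by explicit branching. Two caveats on the sieve. First, your claim that matching $\dim V_\lambda=N$ against dimensions of irreducible $G^*$-modules ``already rules out all but small $\lambda$'' is only heuristic, since $G^*$ has irreducibles of arbitrarily large dimension; the effective constraint is the one you state last, namely that the restricted weights with multiplicities must assemble into a single $G^*$-irreducible (whose highest weight is the restriction of $\lambda$), and equidimensional coincidences are exactly the trap you correctly flag. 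Second, in the maximal-subgroup induction you should note that any $M$ stabilizing a proper subspace of $V$ is excluded automatically, so only the irreducibly acting maximal subgroups need testing; the Borel--de Siebenthal regular subgroups you mention are of maximal rank and mostly act reducibly on $V$, so the real work concentrates on the S-subgroups, whose classification is itself intertwined with the theorem being proved --- this near-circularity (resolved by Dynkin via induction on rank) deserves explicit acknowledgement. With these points understood, your proposal is a correct roadmap rather than a proof, which matches the status the statement has in the paper; note also that the paper only ever invokes the theorem with $G^*$ simple, a special case your sketch covers.
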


Note that our applications involve only the case when $G^*$ is simple, too.

\begin{remark}
By the term inclusion type we mean that two pairs of subgroups $A^* \subsetneq A$ and $B^* \subsetneq B$ are in the same inclusion type whenever $A^*$ and $A$ are conjugate to $B^*$ and $B$ (in $GL(N,\mathbb{C})$, via the same conjugation).
\end{remark}

\begin{proof}[Proof of Theorem~\ref{T:2}]
By \cite[Theorem 19.2, 19.14]{FuHa} $K_n(\mathbb{C})$ is an irreducible $PSO(n,\mathbb{C})$-module.

First assume that $n>4$.
If $X(\mathbb{C})$ is a complex semisimple subgroup of $GL(m,\mathbb{C})$ such that $PSO(n,\mathbb{C})\leq X(\mathbb{C}) \leq GL(m,\mathbb{C})$ holds, then $X(\mathbb{C})$ is simple ($K_n(\mathbb{C})$ is an irreducible, tensor indecomposable module because it is already so under $PSO(n,\mathbb{C})$).
By Theorem~\ref{T:D}, it follows that if $X(\mathbb{C})\neq PSO(n,\mathbb{C})$, then either $X(\mathbb{C})=SL(m,\mathbb{C}),SO(m,\mathbb{C})$ or $X(\mathbb{C})$ is given explicitly in \cite[Table 5]{dynkin}.
Therefore, if $n>4$, then we have the following list on the corresponding overgroups:
\begin{center}
 \begin{tabular}{||c c c||}
 \hline
 $X(\mathbb{C})$ & $X$ & $K_n(\mathbb{R})$ invariant \\ [0.5ex]
 \hline\hline
 $PSO(n,\mathbb{C})$ & $PSO(n,\mathbb{R})$ & yes \\
 \hline
 $SL(n,\mathbb{C})/\langle -I\rangle$ & $SU(n)/\langle -I\rangle$ &  no  \\
 \hline
 $SL(m,\mathbb{C})$ & $SU(m)$ & no \\
 \hline
 $SO(m,\mathbb{C})$ & $SO(m,\mathbb{R})$ & yes \\
 \hline
\end{tabular}
\end{center}

The case $n=4$ is a bit different because $K_4(\mathbb{R})$ is not a simple module for $PSO(4,\mathbb{C})$. Then either $X(\mathbb{C})=SL(6,\mathbb{C}),SO(6,\mathbb{C})\cong SL(4,\mathbb{C})/\langle -I\rangle$ or $X(\mathbb{C})$ is semisimple but not simple.
In this latter case $X(\mathbb{C})$ is a nontrivial product of simple Lie groups. According to part (d) of \cite[Theorem 2.2]{G2} (c.f. \cite[Theorem 1.3]{dynkin}) this can happen only when $X(\mathbb{C})=PGL(2,\mathbb{C})\times PGL(2,\mathbb{C})\cong PSO(4,\mathbb{C})$.

For $n=m=3$ we note that $PSO(3,\mathbb{R})$ is isomorphic to $SO(3,\mathbb{R})$.

Hence we conclude that the semisimple overgroups are the same as we have desired.
\end{proof}

Now we are in a position to present the proof of our last result. We will use the following notation in it. The symbol $\lambda$ stands for the highest weight with respect to any representation of a semisimple algebraic group $G$ on the vector space $V$. Let $Aut(\Delta)$ be the automorphisms group of the Dynkin diagram $\Delta$. Denote $Aut(\Delta, \lambda)$ the subgroup of $Aut(\Delta)$ fixing $\lambda$. Furthermore, we specify $G:=PSO(n,\mathbb{R})$ and $V:=K_n(\mathbb{R})$.

\begin{proof}[Proof of Theorem~\ref{CK}]

In a similar fashion as in Section 2. we conclude that any overgroup of $G$ normalizes a semisimple one. As for the normalizer of $G$ in $GL(V)$, we can consider the Youla decomposition of an $A\in V$ as $A=Q\Sigma Q'$ where
\[
\Sigma=\textbf{0}_{n-2r} \bigoplus_{i=1}^{r} \begin{pmatrix}
0 & a_i \\
-a_i &  0
\end{pmatrix}
\]
with singular values $a_1 \geq a_2 \geq ... \geq a_r > 0$.
We observe that the orthogonal congruence orbit of $A$, i.e.,
$$\mathcal{O}(A)=\{QAQ': \quad Q\in O(n,\mathbb{R}) \}$$
consists of skew-symmetric matrices with fixed singular values. Therefore, \cite[Theorem 4.2]{chi} is useful for the description of mappings which sends an orthogonal congruence orbit to another. Hence we conclude that if $n\neq 4$, then
\[
N_{GL(V)}(G)\leq \langle \mathbb{R}_+^\times , G, \tau \rangle = \langle \mathbb{R}^\times , G \rangle
\]
and thus the normalizer of $G$ is $\langle \mathbb{R}_+^\times , G, \tau \rangle$. If $n=4$,
the normalizer is a subgroup of $N'=\langle \mathbb{R}^\times , G, \psi \rangle$, again by \cite[Theorem 4.2]{chi}. On the other hand, writing $\pi_0$ to mean the component group, we have 
\[
\pi_0(N_{GL(V)}(G)) \cong Aut(A_1\times A_1,\lambda) = \mathbb{Z}/2\mathbb{Z} \neq 1.
\]
Therefore, $\psi \in N_{GL(V)}(G)$ and it follows that $N_{GL(V)}(G)$ equals to $N'$.
Since $A$ and $\psi(A)$ have the same characteristic polynomial the result on isometry groups follows.
\end{proof}

We close this paper by noting that the last part of the above proof can be completed, alternatively, having proved the "upper bound" for $N_{GL(V)}(G)$ only. Indeed, referring to the fact that $\psi$ is an isometry the inclusions
\[
\mathcal{K}\leq N_{GL(V)}(G) \leq N'=\langle \mathbb{R}^\times , G, \psi \rangle
\]
yield the assertion.

\section{Acknowledgement}
The first author was partially supported by the National Research, Development and Innovation Office -- NKFIH Reg. No. K115383.
The second author was partially supported by NSF grants DMS-1302886 and DMS-1600056.

The first author would like to express his hearty thanks to Prof. G\'abor P\'eter Nagy for illuminating discussions.


\bibliographystyle{amsplain}

\end{document}